\def\ker{\mathop{\rm Ker}\nolimits}
\def\dsum{\displaystyle\sum}
\def\dfrac{\displaystyle \frac}
\def\inc{\mathop{\rm \iota}\nolimits}
\newtheorem{theorem}{Theorem}
\newcounter{prop}
\newtheorem{proposition}[prop]{Proposition}
\newcounter{coro}
\newtheorem{corollary}[coro]{Corollary}
\begin{document}

\date{\today}

\title{On CR (Cauchy-Riemann) almost cosymplectic manifolds}

\author{Piotr Dacko}

\email{{\tt piotrdacko{\char64}yahoo.com}}

\subjclass[2000]{53C15, 53C25}

\keywords{almost cosymplectic manifold, CR manifold, Levi flat manifold, nullity conditions, almost cosymplectic $(\kappa,\mu,\nu)$-space, CR chart, Hermitian connection}

\begin{abstract} In the paper we develop a framework for the alternative way of the study of a local geometry of almost cosymplectic manifolds with K\"alerian leaves. The main idea is to apply the concept of a geometry and analysis of CR manifolds. Locally the almost cosymplectic manifold is modeled on the 'mixed' space $\mathbb{R}\times\mathbb{C}^n$. There is given a complete local description of the underlying almost contact metric structure  in the system of local, mixed - real, complex- coordinates. We also introduce a notion of a canonical Hermitian complex connection in the CR structure of a CR almost cosymplectic manifold. As an example we provide detailed descritpion of almost cosymplectic $(-1,\mu,0)$-spaces.  
 \end{abstract}

\maketitle

\begin{center}
 \small{\it Dedicated to the Memory of Marek Kucharski}
\end{center}

\section{Introduction}
The paper is thought of as a preliminary to the subject of possible applications of the methods coming 
from the geometry of CR manifolds or complex geometry. The starting point is a notion of a CR manifold. 

It is very common nowadays in the geometry of the almost contact metric manifold to impose  conditions of a "CR integrability"
of an almost contact metric structure. One of the first result concerning "CR integrable" almost contact metric structures is the theorem of S. Tanno - the characterization of a CR integrable contact metric manifold
\cite{Tanno}. 

In the settings of almost cosymplectic manifolds CR geometry implicitly appeared in the time when Z. Olszak introduced  quite naturally the notion of the almost cosymplectic manifolds with K\"ahlerian leaves \cite{Olszak2}. Now it is clear that 
these manifolds are exactly CR integrable almost cosymplectic manifolds. And this statement is almost tautological from the point of view of the geometry of CR manifolds. However the main focus was on studying the Riemannian geometry using 
tensor calculus. 

In the presented paper we propose an alternative way to study manifolds with K\"ahlerian leaves, based on the CR geometry. We hope that this alternative  allow us to solve some long-standing and yet unsolved problems. For example we do not know are there exist almost cosymplectic non-cosymplectic manifolds 
of a pointwise-constant $\varphi$-sectional curvature in dimensions $> 3$? Even in the case of manifold with K\"ahlerian leaves the answer is unknown. The other benefit  is that new classes of manifolds appear quite naturally on the base of the local description in the so-called CR charts (Sec. 3). The main point is that locally an almost cosymplectic manifold is modeled on  $\mathbb{R}\times\mathbb{C}^n$ thus we have a 'mixed' local coordinates and an almost contact metric structure $(\varphi,\xi,\eta,g)$ can be described using these 'mixed' coordinates. As a working example we provide such description in very details for almost cosymplectic $(-1,\mu,0)$-spaces with $\mu=const$.

\section{Preliminaries}
\subsection{Almost cosymplectic manifolds}
A manifold $\mathcal{M}$ of an odd dimension, $\dim \mathcal M=2n+1\geqslant 3$ smooth, connected, being endowed with an almost contact metric structure $(\varphi,\xi,\eta,g)$, where $\varphi$ is a $(1,1)$-tensor field, $\xi$ a vector field, $\eta$ a 1-form  and $g$ a Riemannian metric, and the following conditions are satisfied  \cite{Blair}
\begin{equation}
\label{structeq}
\begin{array}{l}
   \varphi^{2} = -I+\eta\otimes\xi, \quad \eta(\xi)=1, \quad \eta(X)=g(X,\xi),\\[+4pt]
   g(\varphi X,\varphi Y)=g(X,Y)-\eta(X)\eta(Y).
\end{array}
\end{equation}
is called an almost contact metric manifold. If the forms  $\eta$ and the fundamental skew-symmetric 2-form $\varPhi(X,Y)=g(\varphi X,Y)$ are both closed the almost contact metric manifold  $\mathcal M$ is called almost cosymplectic \cite{GY,Olszak1}.

An almost contact metric manifold is called normal if \cite{Blair}
\begin{equation}
 \label{prelim-norm}
N_\varphi+2d\eta\otimes\xi=0
\end{equation}
here $N_\varphi$ denotes the Nijenhuis torsion of $\varphi$ defined by
\[
 N_\varphi(X,Y)=\varphi^2[X,Y]+[\varphi X,\varphi Y]-\varphi[\varphi X,Y]-\varphi[X,\varphi Y].
\]

The normal almost cosymplectic manifold is called cosymplectic. From (\ref{prelim-norm}) it follows that an almost cosymplectic manifold
is cosymplectic if and only if the torsion $N_\varphi$ vanishes. In that case the tensor field $\varphi$ is integrable as a $G$-structure,
i.e. there is a suitable atlas of local coordinates charts: the local coefficients of $\varphi$ on each chart are constant functions. Arbitrary
 cosymplectic manifold is locally a  Riemannian product of a real line (an open interval) and a K\"ahler manifold. D.~E.~Blair proved \cite{Blair2} that an
almost contact metric manifold is cosymplectic if and only if
\[
 \nabla\varphi =0,
\]
 for the Levi-Civita connection $\nabla$. S.~I.~Goldberg and K.~Yano studying harmonic forms  proved the following theorem \cite{GY}: an almost cosymplectic manifold is cosymplectic if and only if 
\begin{equation}
 \label{prelim-Rphi}
R(X,Y)\varphi Z = \varphi R(X,Y)Z,
\end{equation}
$R$ the Riemann curvature operator
\[
 R(X,Y) = \nabla_X\nabla_Y-\nabla_Y\nabla_X -\nabla_{[X,Y]}.
\]
That theorem can be viewed as an analogue of a similar theorem for K\"ahler manifolds. Precisely for an almost cosymplectic 
manifold (\ref{prelim-Rphi}) implies $\nabla\varphi=0$ and by the Blair result the manifold is cosymplectic.

An almost cosymplectic manifold $\mathcal M$ carries a canonical foliation 
$\mathcal M = \bigcup\limits_{p\in \mathcal{M}}\mathcal{N}_p$ which corresponds to a completely integrable  distribution defined by $\eta=0$. A leaf $\mathcal{N}_p\subset\mathcal{M}$ can be in a natural way considered as an almost K\"ahler manifold $(\mathcal{N}_p, J, G)$. The almost Hermitian structure $(J,G)$ is given  by 
  \[
      \inc_*\circ J=\varphi\circ \inc_*,\quad\quad G=\inc^*g,
  \]    
$\inc$ denotes the inclusion map $\inc:\mathcal{N}_p\subset \mathcal M$. If $\mathcal M$ is cosymplectic then all leaves are in fact K\"ahler manifolds. However 
the converse is not true. Z. Olszak introduced almost cosymplectic manifolds defined by imposing the following geometric condition: each leaf is a K\"ahler manifold. We have the following characterization \cite{Olszak2}: an almost cosymplectic manifold has K\"ahlerian leaves if and only if
\begin{equation}
\label{kaehleav}
    (\nabla_{X}\varphi)Y = -g(\varphi AX,Y)\xi + \eta(Y)\varphi AX,
\end{equation}
a $(1,1)$-tensor field $A$ is defined by
\begin{equation}
\label{defA}
  AX = - \nabla_X\xi.
\end{equation}

If  the curvature operator of the Levi-Civita connection $R(X,Y)Z$ of an almost cosymplectic manifold  
 satisfies
\begin{equation}
\label{prelim-Rxi}
\begin{array}{c}
R(X,Y)\xi = \eta(Y)PX-\eta(X)PY, \\[+6pt] 
P=\kappa Id+\mu h +\nu A
 ×
\end{array}
\end{equation}
$Id$ the identity tensor, $h = \dfrac{1}{2}\mathcal{L}_\xi \varphi$, $\mathcal{L}_\xi$ the Lie derivative, and $\kappa,\mu,\nu$ are functions such that for the 1-forms 
$d\kappa$, $d\mu$, $d\nu$ we have 
\begin{equation}
\label{prelim-dkappa}
d\kappa\wedge\eta=d\mu\wedge\eta=d\nu\wedge\eta=0,
\end{equation}
then the manifold is called an almost cosymplectic $(\kappa,\mu,\nu)$-space \cite{DO2}. The particular case is when $\kappa$, $\mu$, $\nu$ are constants. Almost cosymplectic manifolds satisfying the condition (\ref{prelim-Rxi}) with $\kappa=const$, $\mu=\nu=0$ were studied in \cite{Dacko}; and with $\kappa$, $\mu=const.$, $\nu=0$ in \cite{Endo1,Endo2,Endo3}. Manifolds with $\kappa=-1$, $\mu=const.$, 
 $\nu=0$ are classified in \cite{DO3}.

Given positive functions $\alpha$, $\beta$ on $\mathcal{M}$ an almost contact metric structure $(\varphi',\xi',\eta',g')$
 defined by
\[
 \varphi'=\varphi,\quad \xi'=\beta^{-1}\xi,\quad \eta'=\beta\eta,\quad g'=\alpha g+(\beta^2-\alpha)\eta\otimes\eta,
\]
is called a D-conformal deformation of the structure $(\varphi,\xi,\eta,g)$.  The  structure $(\varphi',\xi',\eta',g')$ is by itself an almost cosymplectic if and only if $\alpha$ is a constant and the 
function $\beta$ satisfies $d\beta\wedge\eta=0$. 
\begin{proposition}
 \mbox{\rm (\cite{DO2})} If $(\mathcal{M},\varphi,\xi,\eta,g)$  is an almost cosymplectic $(\kappa,\mu,\nu)$-space then its image 
by a D-conformal deformation (assuming an image is almost cosymplectic) 
is a $(\kappa',\mu',\nu')$-space where
\[
 \kappa' = \dfrac{\kappa}{\beta^2},\quad \mu'=\dfrac{\mu}{\beta},\quad \nu'=\dfrac{\nu\beta-d\beta(\xi)}{\beta^2}.
\]
\end{proposition}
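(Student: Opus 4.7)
The plan is to compute the transformed tensors $h'$ and $A'$, then extract $R'(X,Y)\xi'$ from the identity
\[
R'(X,Y)\xi' = (\nabla'_Y A')X - (\nabla'_X A')Y,
\]
which holds on any almost cosymplectic manifold (direct consequence of $A' = -\nabla'\xi'$ and torsion-freeness), and finally match the result against (\ref{prelim-Rxi}) using $\eta' = \beta\eta$.

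First, the Lie-derivative product rule $(\mathcal{L}_{f\xi}\varphi)Y = f(\mathcal{L}_\xi\varphi)Y + Y(f)\varphi\xi - (\varphi Y)(f)\xi$ with $f = \beta^{-1}$ yields $h' = \beta^{-1}h$ at once, because $\varphi\xi = 0$ and $d\beta\wedge\eta = 0$ force $(\varphi Y)\beta = 0$. Next, apply the Koszul formula to $g' = \alpha g + (\beta^2-\alpha)\eta\otimes\eta$; using $d\eta = 0$, $d\beta = d\beta(\xi)\eta$, and the identity $X\eta(Y) - \eta(\nabla_X Y) = -g(AX,Y)$, the difference $g'(\nabla'_X Y - \nabla_X Y,Z)$ is proportional to $\eta(Z)$. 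Hence $T := \nabla' - \nabla$ takes values in $\mathbb{R}\xi$, and setting $Z=\xi$ gives
\[
T(X,Y) = \Big[\beta^{-1}d\beta(\xi)\,\eta(X)\eta(Y) - \beta^{-2}(\beta^2-\alpha)g(AX,Y)\Big]\xi.
\]
From this, $\nabla'_X\xi = -AX + \beta^{-1}d\beta(\xi)\eta(X)\xi$ (the $g(AX,\xi)$ contribution drops because $A\xi = 0$). Substituting into $\nabla'_X\xi' = X(\beta^{-1})\xi + \beta^{-1}\nabla'_X\xi$ and using $X\beta = d\beta(\xi)\eta(X)$, the two $d\beta(\xi)\eta(X)\xi$ terms cancel exactly, yielding $A' = \beta^{-1}A$ with no rank-one residue.

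For the curvature, expand $(\nabla'_X A')Y$ via $\nabla' = \nabla + T$ and $A' = \beta^{-1}A$; the only $\xi$-valued piece is a multiple of $g(A^2 X, Y)\xi$, which is symmetric in $X,Y$ since $A$ is $g$-symmetric, hence drops out upon antisymmetrization. What survives is
\[
R'(X,Y)\xi' = \beta^{-1}R(X,Y)\xi - \beta^{-2}d\beta(\xi)\big[\eta(Y)AX - \eta(X)AY\big].
\]
Inserting the hypothesis $R(X,Y)\xi = \eta(Y)(\kappa\,Id + \mu h + \nu A)X - \eta(X)(\kappa\,Id + \mu h + \nu A)Y$, factoring out $\eta' = \beta\eta$, and reading off the coefficients of $Id$, $h'=\beta^{-1}h$, and $A' = \beta^{-1}A$ produces precisely $\kappa' = \kappa/\beta^2$, $\mu' = \mu/\beta$, and $\nu' = (\nu\beta - d\beta(\xi))/\beta^2$. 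The main obstacle, and the subtle point of the argument, is the cancellation in the computation of $A'$: the fact that $A'$ has no additional $\eta\otimes\xi$ component depends on the two identities $A\xi = 0$ and $d\beta = d\beta(\xi)\eta$ combining just right; were either weaker, the final $P'$ would carry extra terms incompatible with the $(\kappa',\mu',\nu')$-form.
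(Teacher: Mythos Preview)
The paper does not supply its own proof of this proposition; it is quoted from \cite{DO2} and stated without argument. Your computation is therefore being compared against nothing in the present paper, and on its own merits it is correct: the transformation laws $h'=\beta^{-1}h$ and $A'=\beta^{-1}A$ are derived cleanly, the difference tensor $T=\nabla'-\nabla$ is identified as $\xi$-valued via Koszul, and the antisymmetrization that produces $R'(X,Y)\xi'$ from $(\nabla'A')$ is handled properly (the symmetric $g(AX,AY)\xi$ piece indeed cancels, since $A$ is $g$-symmetric on an almost cosymplectic manifold). The final read-off of $\kappa',\mu',\nu'$ is accurate.

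One small omission: to conclude that the deformed structure is genuinely a $(\kappa',\mu',\nu')$-space in the sense of (\ref{prelim-Rxi})--(\ref{prelim-dkappa}) you should also check that $d\kappa'\wedge\eta'=d\mu'\wedge\eta'=d\nu'\wedge\eta'=0$. This is immediate for $\kappa'$ and $\mu'$ from $d\kappa\wedge\eta=d\mu\wedge\eta=d\beta\wedge\eta=0$; for $\nu'$ one needs in addition $d(\xi\beta)\wedge\eta=0$, which follows from $0=d^2\beta=d(\xi(\beta)\eta)=d(\xi\beta)\wedge\eta$. Adding this one line would make the argument complete.
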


So, roughly speaking we may say that the class of almost cosymplectic $(\kappa,\mu,\nu)$-spaces is closed with 
respect to the group of the inner D-conformal deformations. Deformations are inner in the sense that they preserve 
the class of almost cosymplectic manifolds.

 For an almost cosymplectic 
$(\kappa,\mu,\nu)$-space the tensor field $A$ and the functions $\kappa$, $\mu$ and $\nu$ satisfy 
the relations
\begin{equation}
 \begin{array}[]{rcl}
A^2 &=& -\kappa(Id -\eta\otimes\xi), \\
\nabla_\xi A &=& \mu\,h+\nu\,A, \\
d\kappa(\xi) &=& 2\nu\kappa,
 ×
\end{array}
\end{equation}
and these relations are fundamental in the process of the classification of $(\kappa,\mu,\nu)$-spaces. Eg. they 
imply that if $\kappa = const \neq 0$ then the function $\nu$ must vanish identically. Another consequence is 
that arbitrary $(\kappa,\mu,\nu)$-space  is (generally speaking locally) D-conformal to a $(-1,\mu,0)$-space. 
Thus the almost cosymplectic $(-1,\mu,0)$-spaces are of particular importance and they are called the ``model spaces''.

\subsection{CR manifolds (\cite{BER,Jacob,Tai})}
For complex vector fields $Z_1$, $Z_2$ a bracket $[Z_1,Z_2]$ is defined as a (unique) complex vector field such that
\[
 [Z_1,Z_2]f = Z_1(Z_2 f)-Z_2(Z_1 f), 
\]
for arbitrary complex-valued smooth function $f$.
A CR manifold is a pair $(\mathcal M, \mathcal H)$ where $\mathcal M$ is a smooth manifold 
and $\mathcal H$ is a $C^\infty$ complex subbundle of $TM\otimes\mathbb C$ such that:
\begin{equation}
     \mathcal H_p\cap\bar{\mathcal H}_p =0, \quad\quad p\in M,
\end{equation}
and the set of sections $\Gamma(\mathcal H)$ of $\mathcal H$ is closed with respect to the bracket operation.

A  $m=\dim_{\mathbb C}\mathcal H$ is called a CR dimension of a CR manifold $(\mathcal M,\mathcal H)$ and $n-2m$  a CR codimension. 

The examples of CR manifolds are complex manifolds (CR codimension is $0$) and real hypersurfaces in $\mathbb C^n$ (CR codimension is $1$). 
 
Now let $(\mathcal M,\mathcal H)$ be a CR manifold of a hypersurface type, i.e.  CR-codim $=1$. For an imaginary non vanishing $1$-form $\tau$ annihilating $\mathcal H\oplus\bar{\mathcal H}$ (it is possible that $\tau$ is defined only locally), and a vector $z\in \mathcal H_p$ let 
\begin{equation}
\label{dlform}
    z \mapsto L_p(z)=\tau_p([Z,\bar Z]_p),
\end{equation}
where $Z\in \Gamma(\mathcal H)$ is an local extension of $z$. The function $z \mapsto L_p(z)$ is a real quadratic form on $\mathcal H_p$ called the Levi form. The form $L_p$ is defined up to a nonzero scalar for if $\tau$ is replaced by $\tau'=f\tau$, $f(p)\neq 0$ then (\ref{dlform}) yields to $L'_p(z)=f(p) L_p(z)$. If $L_p$ is non-degenerate it is a pseudo-Hermitian form on $\mathcal H_p$. 

A CR-manifold $(\mathcal M,\mathcal H)$ is said to be Levi flat if its Levi form vanishes everywhere. 
\subsection{CR structure of an almost cosymplectic manifold}

Let $\mathcal{M}$ be an almost contact metric manifold and $\mathcal D$ a distribution $\mathcal{D}=Im \,\varphi$, therefore
$\mathcal{D}$ is a field of hyperplanes on $\mathcal{M}$
\begin{equation*}
\mathcal{D}: \mathcal{M}\ni p \mapsto \mathcal{D}_p=\lbrace x\in T_p\mathcal{M}: x = \varphi y\; \text{for}\; y\in T_p\mathcal{M}\rbrace.  
\end{equation*}
Equivalently $\mathcal{D}$ can be defined as the kernel of the form $\eta$. We always have $\ker \eta= Im \,\varphi$
The complexification $\mathcal D\otimes \mathbb C$ splits into a direct sum $\mathcal D'\oplus \mathcal D''$ of $\sqrt{-1}$ and  $-\sqrt{-1}$ eigenspaces of the (complexified) tensor field $\varphi$. For a section $Z\in\Gamma(\mathcal D')$ let $X=\dfrac{1}{2}(Z+\bar Z)$. The conditions $\varphi Z=\sqrt{-1}\hspace{1pt}Z$, $\eta(\varphi Z)=0$ together imply that 
\begin{equation}
\label{hsec}
    Z=X-\sqrt{-1}~\varphi X, \quad\eta(X)=0.
\end{equation}
Conversely given a real vector field $X$, $\eta(X)=0$ (\ref{hsec}) defines a section of $\mathcal D'$. 
    
Let suppose that the pair $(\mathcal{M},\mathcal D')$ is an almost cosymplectic  CR manifold, i.e.
$\mathcal{M}$ is an almost cosymplectic manifold and the complex distribution $\mathcal D'$ defined as above is formally involutive. In order to compute the Levi form $L$ we can take $\tau=-\sqrt{-1}\,\eta$ in (\ref{dlform}) 
\begin{equation*}
\label{lform}
    L_p(z) = -\sqrt{-1}\ \eta([Z,\bar Z]_p)=2\,\eta([X,\varphi X]_p),
\end{equation*}
where $Z=X-\sqrt{-1}\ \varphi X$ and $Z_p=z$. Since $d\eta=0$ and $\eta(X)=\eta(\varphi X)=0$ it follows 
that
\begin{equation*}
    L_p(z)=-4\,d\eta(X_p,\varphi X_p) = 0.
\end{equation*}
Therefore $L$ vanishes identically and $(\mathcal M, \mathcal D')$ is a Levi flat CR manifold (of  the hypersurface type).

Intuitively: a Levi flat CR-manifold is foliated by a family of (real) hypersurfaces, each hypersurface admits a complex structure and this complex structure varies when passing from a hypersurface to another hypersurface according to a differentiability class of the manifold. From this of point view, having in mind the definition of almost cosymplectic manifold with K\"ahlerian leaves, the next proposition is almost tautological
\begin{proposition}
\label{kleavcr}
 The pair $(M,\mathcal D')$ is a (Levi flat) CR manifold if and only if $M$ has K\"ahlerian leaves.
\end{proposition}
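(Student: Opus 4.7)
The plan is to reduce the statement to the well-known fact that an almost K\"ahler manifold is K\"ahler precisely when its almost complex structure is formally integrable. The hinge of the reduction is the identity $d\eta=0$, which makes $\mathcal{D}=\ker\eta$ an involutive real distribution whose leaves $\mathcal{N}_p$ inherit the almost Hermitian structure $(J,G)$ from Section~2; moreover, since the fundamental form $\varPhi$ is closed on $\mathcal{M}$, its restriction to each leaf --- the K\"ahler form of $(J,G)$ --- is closed as well, so every leaf is automatically almost K\"ahler. Because $\mathcal{D}'\subset\mathcal{D}\otimes\mathbb{C}$, sections of $\mathcal{D}'$ are everywhere tangent to the foliation and their Lie brackets remain so; hence the formal integrability of $\mathcal{D}'$ on $\mathcal{M}$ is equivalent, leaf by leaf, to the formal integrability of the $+\sqrt{-1}$-eigenbundle of $J$ on each $\mathcal{N}_p$.

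Concretely, for real sections $X,Y\in\Gamma(\mathcal{D})$, write $Z=X-\sqrt{-1}\,\varphi X$ and $W=Y-\sqrt{-1}\,\varphi Y$. Expanding,
\[
[Z,W]=\bigl([X,Y]-[\varphi X,\varphi Y]\bigr)-\sqrt{-1}\bigl([X,\varphi Y]+[\varphi X,Y]\bigr),
\]
I would verify that the requirement $[Z,W]\in\Gamma(\mathcal{D}')$ collapses to the single identity $\varphi([X,Y]-[\varphi X,\varphi Y])=[X,\varphi Y]+[\varphi X,Y]$; the accompanying $\eta$-condition is automatic because $d\eta=0$ together with $\eta(\varphi\,\cdot)=0$ forces $\eta([X,Y])$ and $\eta([\varphi X,\varphi Y])$ to vanish. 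Applying $\varphi$ to this identity and simplifying by means of $\varphi^{2}=-I+\eta\otimes\xi$ (again discarding $\eta$-terms via $d\eta=0$) reorganizes it exactly into $N_{\varphi}(X,Y)=0$. In short, involutivity of $\mathcal{D}'$ is equivalent to the vanishing of $N_{\varphi}$ on pairs of vector fields tangent to $\mathcal{D}$.

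The last step is to observe that under the natural identification $T\mathcal{N}_p\simeq\mathcal{D}_p$ the restriction of $N_{\varphi}$ to $\mathcal{D}\times\mathcal{D}$ coincides with the Nijenhuis tensor $N_{J}$ of the induced almost complex structure on each leaf; this is immediate from the definitions, since the $\xi$-components again drop out. Because every leaf is already almost K\"ahler, the Newlander--Nirenberg theorem (equivalently, the standard tensor-calculus characterization of K\"ahler among almost K\"ahler manifolds) guarantees that $N_{J}=0$ if and only if $(\mathcal{N}_p,J,G)$ is K\"ahler. Chaining these equivalences yields the proposition.

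The delicate point is the bracket-to-Nijenhuis reduction in the middle paragraph: it is the unique step where the almost cosymplectic hypothesis is doing real work, through $d\eta=0$ and the structural identity $\varphi^{2}=-I+\eta\otimes\xi$. Without closedness of $\eta$ the residual $\eta([X,Y])\,\xi$ terms would force $N_{\varphi}$ to acquire a $\xi$-component on $\mathcal{D}$, spoiling the clean leaf-by-leaf identification. Everything else is bookkeeping, which is precisely why the author can describe the statement as almost tautological.
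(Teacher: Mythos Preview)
Your argument is correct. The paper does not supply a proof of this proposition at all: it is stated without proof, preceded only by the remark that the assertion is ``almost tautological from the point of view of the geometry of CR manifolds.'' Your proposal therefore fills in exactly the details the author chose to suppress, and it does so along the expected lines---reducing involutivity of $\mathcal{D}'$ to the vanishing of $N_{\varphi}$ on $\mathcal{D}\times\mathcal{D}$ via the closedness of $\eta$, identifying that restriction with $N_{J}$ on each leaf, and then invoking the classical equivalence ``almost K\"ahler $+$ integrable $=$ K\"ahler.'' The key computation $N_{\varphi}(X,Y)=-A-\varphi B$ (in your notation $A=[X,Y]-[\varphi X,\varphi Y]$, $B=[X,\varphi Y]+[\varphi X,Y]$) and the observation that $\eta(A)=\eta(B)=0$ forces the two real conditions $\varphi A=B$ and $\varphi B=-A$ to coincide are both handled cleanly.
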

In other words an almost cosymplectic CR manifold is exactly  the same concept as an almost cosymplectic manifold with K\"ahlerian leaves.

\section{Analytic almost cosymplectic CR manifolds. The canonical Hermitian structure.}
\subsection{CR charts on an analytic almost cosymplectic CR manifold} 

Suppose $\mathcal{M}$ is a real-analytic almost cosymplectic CR manifold (or a manifold with K\"alerian leaves),  and the tensor fields $\varphi$, $\xi$, $\eta$ and $g$ are assumed to be real-analytic. In a consequence the CR structure $\mathcal D'$ of $\mathcal{M}$ is real-analytic - there are local real-analytic sections spanning $\mathcal D'$. Now, according to the theorem of A.~Andreotti and D.C. Hill (\cite{AndrHill}) there is a local embedding (real-analytic) 
\begin{equation}
 f : \mathcal{M} \rightarrow \mathbb{C}^{n+1},
\end{equation}
($ \dim \mathcal{M} =2n+1$) such that $f(\mathcal M)$ is locally a real-analytic hypersurface. If 
$$
\mathcal{M}\ni q \mapsto p=(z^1,\ldots,z^n, z^{n+1})=f(q) \in f(\mathcal M),
$$  
then there is a maximal complex subspace $\mathcal{H}_p$ in $T_p^{(1,0)}\mathbb{C}^{n+1}$ such that 
\begin{equation*}
   \Re (\mathcal{H}_p\oplus\, \overline{\mathcal{H}}_p) 
= J(T_pf(\mathcal{M}))\cap T_pf(\mathcal{M}),
\end{equation*}
$J$ denotes the canonical complex structure of $\mathbb{C}^{n+1}$, and the complexification of the tangent map $f_*$ 
defines a $\mathbb{C}$-linear isomorphism between complex spaces 
$$
:\quad \mathcal{D'}_q \xrightarrow{f^\mathbb{C}_*} \mathcal{H}_p. 
$$
If $\mathcal{U}_q\subset \mathcal{M}$ is  sufficiently small then $f|_{\mathcal{U}_q}$
 is a diffeomorphism onto its image $f(\mathcal{U}_q)$. Now let consider a real hypersurface in $\mathbb{C}^{n+1}$. That is a set of zeros $\mathcal{S}=r^{-1}(0)$ of a smooth real-valued function $r:\mathbb{C}^{n+1}\rightarrow \mathbb{R}$. We assume that $r$ is regular: $dr\neq 0$ for each point of $\mathcal{S}$. The space $\mathbb{C}^{n+1}$ endowed with its canonical flat K\"ahler metric becomes an Euclidean space $\mathbb{E}^{2n+2}$ 
so let $H$ be the second fundamental form of $\mathcal{S}$ treated as a hypersurface in $\mathbb{E}^{2n+2}$. Let define a real quadratic 
differential form on a complex subbundle  $\mathcal{H}:p\mapsto \mathcal{H}_p$, $p\in \mathcal{S}$ ($\mathcal{H}_p$ is defined similarly as above for $f(\mathcal{M})=\mathcal{S}$) by the formula
\[
 L(z) = H(X,X)+ H(JX,JX), \qquad z\in \mathcal{H}_p,\; z= X-\sqrt{-1}JX,\; X\in T_p\mathcal{S}, 
\]
for a complex vector $z$ tangent to $\mathcal{S}$. The form $L$ is a Levi form of the real hypersurface $\mathcal{S}$ (cf. \cite{Tai}).  Let consider two examples
\begin{enumerate}
 \item $\mathcal{S}=\mathbb{S}^{2n+1}(r)$ a canonical sphere of the radius $r$; as the second fundamental form is non-degenerate and definite  the Levi form is nondegenerate and definite: $\mathbb{S}^{2n+1}(r)$ is an example of a strictly pseudo-convex real hypersurface,
\item $\mathcal{S}= \lbrace  p=(z^1,\ldots,z^{n+1})\in \mathbb{C}^{2n+1}: \Im\, z^{n+1} = 0\rbrace$; clearly $\mathcal{S}$ now is simply a hyperplane hence $H=0$ identically, in a consequence the Levi form vanishes; $\mathcal{S}$ is the simplest example of a Levi flat real hypersurface. 
\end{enumerate}
We need the following basic result: a Levi flat real-analytic hypersurface is locally biholomorphic to the hyperplane 
described in the example (2). So let assume that a Levi flat real hypersurface is passing through the origin $o$ of $\mathbb{C}^{n+1}$ then there is a small disk $D_o$ centered at $o$ and a biholomorphism    
$F: D_o \rightarrow D_o$ such that 
\begin{equation*}
 F(\mathcal{S}\cap D_o) = \lbrace \Im\, z^{n+1} = 0 \rbrace \cap D_o.
\end{equation*}
Now let consider the sequence of maps  
\begin{equation}
 \mathcal{U}_q \xrightarrow{f} f(\mathcal{U}_q) \cap D_o \xrightarrow{F} \lbrace \Im\, z^{n+1} = 0 \rbrace \cap D_o,
\end{equation}
the existence of $F$ follows from the fact that $f(\mathcal{U}_q)$ 
is a real hypersurface (for sufficiently small $\mathcal{U}_q$) as it is defined above, i.e. there is a regular real function 
$r$ such that $f(\mathcal{U}_q) \subset r^{-1}(0)$ and $f(\mathcal{U}_q)$ is Levi flat.
\begin{proposition}
 Let $(\mathcal{M}, \mathcal{D}')$ be a real-analytic almost cosymplectic CR manifold. Then each point $q$ of $\mathcal{M}$ 
admits a neighborhood $\mathcal{U}_q$ and a local diffeomorphism $f_q: \mathcal{U}_q \rightarrow (-a,a)\times D'$
where $(-a,a)$ is an open interval $a>0$ and $D'$ is a small disk in $\mathbb{C}^{n}$.
\end{proposition}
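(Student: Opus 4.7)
The plan is to chain together the three ingredients already laid out in this section. First, invoke the Andreotti-Hill theorem to obtain, in a neighborhood $\mathcal{U}_q$ of the given point $q$, a real-analytic CR embedding $f:\mathcal{U}_q\to\mathbb{C}^{n+1}$ whose image $\mathcal{S}=f(\mathcal{U}_q)$ is a real-analytic hypersurface and for which $f^{\mathbb{C}}_*$ realizes a fiberwise $\mathbb{C}$-linear isomorphism $\mathcal{D}'\to \mathcal{H}$. Second, observe that by Proposition~\ref{kleavcr} our CR structure is Levi flat; since the Levi form is defined via brackets of $\mathcal{D}'$-sections and $f^{\mathbb{C}}_*$ is a CR isomorphism that commutes with the bracket, the transplanted structure $\mathcal{H}$ on $\mathcal{S}$ vanishes identically as a Levi form, i.e.\ $\mathcal{S}$ is a Levi flat real-analytic hypersurface of $\mathbb{C}^{n+1}$.

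Third, apply the local flattening result for Levi flat real-analytic hypersurfaces recalled just before the proposition: on a sufficiently small polydisk $D_o\subset\mathbb{C}^{n+1}$ around $p=f(q)$ there is a biholomorphism $F:D_o\to D_o$ with $F(\mathcal{S}\cap D_o)\subset\lbrace \Im\,z^{n+1}=0\rbrace$. After shrinking $\mathcal{U}_q$ so that $f(\mathcal{U}_q)\subset D_o$, I would take $f_q$ to be $F\circ f$ followed by the evident identification $\lbrace \Im\,z^{n+1}=0\rbrace\cong \mathbb{R}\times\mathbb{C}^n$ given by $(z^1,\ldots,z^n,x+\sqrt{-1}\cdot 0)\mapsto (x,z^1,\ldots,z^n)$. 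Each factor is a local diffeomorphism (in fact $F$ is biholomorphic), hence so is $f_q$; one more shrinkage places the image inside a product $(-a,a)\times D'$ with $D'\subset\mathbb{C}^n$ a small disk.

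The main obstacle is conceptual rather than computational: it is the transfer of Levi flatness through the embedding, which justifies the applicability of the flattening theorem. Once this identification of the intrinsic Levi form of $\mathcal{D}'$ with the extrinsic one on $\mathcal{S}$ is granted (and it is automatic from $f$ being a CR embedding), the remainder reduces to composing local diffeomorphisms. The real-analyticity hypothesis enters precisely through Andreotti-Hill and through the biholomorphic straightening of Levi flat hypersurfaces; both are cited as black boxes.
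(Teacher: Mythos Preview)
Your proposal is correct and follows essentially the same route as the paper's own proof: Andreotti--Hill embedding, Levi flatness of the image hypersurface, biholomorphic straightening to $\{\Im\,z^{n+1}=0\}$, identification with $\mathbb{R}\times\mathbb{C}^n$, and a final shrinkage (the paper phrases this last step via the inverse function theorem applied to $F\circ f$). The only minor remark is that Levi flatness of $(\mathcal{M},\mathcal{D}')$ is established directly in the computation preceding Proposition~\ref{kleavcr} rather than in that proposition itself, but this does not affect the argument.
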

 \begin{proof}
  By the Andreotti, Hill theorem there is a local embedding $f$ such that  $f(\mathcal{U}_q)$ is a Levi flat real-analytic hypersurface in $\mathbb{C}^{n+1}$. Now (we may assume $f(q) = o$) there is a local biholomorphism $F$ of an open disk $D_o$ which 
maps $f(\mathcal{U}_q) \cap D_o$ onto $\lbrace \Im\, z^{n+1}=0 \rbrace \cap D$. So an image of $\mathcal{U}_q$ by the 
composition $F\circ f$ is contained in the hyperplane $\Im\, z^{n+1}=0$. In the natural manner we identify 
$\Im\, z^{n+1}=0$ with $\mathbb{R}\times \mathbb{C}^{n}$
\[
 \lbrace \Im\, z^{n+1}=0\rbrace \ni p=(z^1,\ldots,z^{n+1}) \mapsto (t, z^1, z^2,\ldots, z^{n}), \quad t=\Re \,z^{n+1}.
\]
The differential $(F\circ f)_*$ is non-degenerate at $q$. By the standard inverse function arguments we can assert that 
there is an open set of the form $(-a,a)\times D' \subset (F\circ f) (\mathcal{U}_q)$ with well-defined an inverse map
$(-a,a)\times D' \rightarrow \mathcal{U}_q$. An image $\mathcal{U}'_q$ by this inverse map is the required neighborhood and we set $f_q = F\circ f|_{\mathcal{U}'_q}$.
 \end{proof}
The CR structure of $(-a,a)\times D'$ is defined by $T^{(1,0)}D'$ - the complex bundle of $(1,0)$ vector fields on $D'$ in natural way embedded into  the complex tangent bundle of $(-a,a)\times D'$.
The family $(\mathcal{U}_q, f_q)$, $q\in \mathcal{M}$ defines a very particular atlas on $\mathcal{M}$. So the manifold is covered by the coordinates charts of the form 
$$
(t, z^1,z^2,\ldots,z^n): (-a,a)\times D'\rightarrow \mathcal{U}_q, \quad t\in \mathbb{R}, z^i\in \mathbb{C}.
$$ 
The transition functions  
\[
f_{qp} = f_q\circ f_p^{-1}: f_p(\mathcal{U}_p\cap\mathcal{U}_q) \rightarrow f_q(\mathcal{U}_p\cap\mathcal{U}_q) 
\]
(we verify this directly) are given as follows
\[
\begin{array}{rcl}
f_{qp} &:& (t,z^1,\ldots,z^n) \mapsto (t', z^{'1},\ldots, z^{'n}), \\[+4pt]
t' &=& t'(t), \\[+4pt]
z^{'1} &=& z^{'1}( t,z^1,\ldots, z^n), \\
\textellipsis \\
z^{'n} &=& z^{'n}(t,z^1,\ldots,z^n),
\end{array}
\]
If $Z_1,\ldots,Z_n$ are  sections of $\mathcal{D'}$  and such that
in the local coordinates on $\mathcal{U}_q$, $Z_i$'s are 'base' vector fields:
\[
 Z_1 = \dfrac{\partial }{\partial z'^1},\; \ldots \;, Z_n= \dfrac{\partial}{\partial z'^n},
\]
in the local coordinates on $\mathcal{U}_p$, $Z_i$'s have the  decompositions:
\[
 \begin{array}{l}
Z_1 = \sum_{i=1}^n f_1^i\dfrac{\partial}{\partial z^i}, \\
\textellipsis \\
Z_n = \sum_{i=1}^nf_n^i\dfrac{\partial}{\partial z^i}
 ×
\end{array}
\]
$f_j^i$ are complex functions, therefore on $\mathcal{U}_q\cap\mathcal{U}_p$
\[
 \begin{array}{l}
\dfrac{\partial}{\partial z'^1} = \sum_{i=1}^n f_1^i\dfrac{\partial}{\partial z^i}, \\
\textellipsis\\
\dfrac{\partial }{\partial z'^n} = \sum_{i=1}^n f_n^i\dfrac{\partial}{\partial z^i}.
 ×
\end{array}
\]
These identities imply
\[
\dfrac{\partial \bar z^j}{\partial  z'^i}=0, \quad \dfrac{\partial t }{\partial z'^j} =0, \quad i,j =1,\ldots,n.
\]
hence $\dfrac{\partial z^j}{\partial \bar z'^i} = 0$ and as $t$ is a real-valued function $\dfrac{\partial t}{\partial \bar z'^j}=0$.

\subsection{A local description of an almost contact metric structure}
 Once we have a clear idea how to understand a 'complex coordinate' on an almost cosymplectic CR manifold we may describe   the almost cosymplectic structure $(\varphi,\xi,\eta,g)$ in terms of these complex  coordinates in the way similar as it is done in the complex 
geometry of Hermitian manifolds. Here we follow (with necessary changes) the monograph \cite{KobNomV2}.

From now on we will consider  {\em complexified} structure $(\varphi^{\mathbb{C}}, \xi^{\mathbb{C}}, \eta^{\mathbb{C}},g^{\mathbb{C}})$ however we use the same notation for both the structure and its complexification. It should be clear from the context when the structure is in fact the complexification. For example $\xi^{\mathbb{C}}$ is a complex vector field. i.e. a section of $T_{\mathbb{C}}\mathcal{M}=T\mathcal{M}\otimes\mathbb{C}$.
The following convention is assumed:  indices matching lowercase Latin letters  $i,j,k,l,\dots$ run from 1 to $n$, while Latin capitals $A, B, C,\ldots$ run through $0,1,\ldots,n,\bar 0,\bar 1,\ldots,\bar n$, moreover 
$\bar 0 = 0$. We set
\begin{equation}
\label{defZ}
     Z_0 = \dfrac{\partial}{\partial t},\quad\quad
     Z_1=\dfrac{\partial}{\partial z^1},\;\ldots\;, Z_n= \dfrac{\partial}{\partial z^n}, 
\end{equation}     
and 
\[
Z_{\bar i} = \overline{Z_i } = \dfrac{\partial}{\partial \bar z^i}. 
\]

\begin{equation}
\label{cstruct}
     \varphi Z_A =\varphi_A^B Z_B ,\quad\quad \xi = \dsum_A\xi^A Z_A, \quad\quad\eta_A=\eta(Z_A),\quad\quad
     g_{AB}=g(Z_A,Z_B).
\end{equation}
Note that $g_{\bar A \bar B}=\bar g_{AB}$. Since the vector fields of $Z_1,\ldots,Z_n$ span $\mathcal D'$ we have 
\begin{equation}
\label{fiza}  \varphi Z_i =\sqrt{-1}Z_i,\quad  \varphi Z_{\bar i} =-\sqrt{-1}Z_{\bar i}.
\end{equation}    
Similarly to a Hermitian metric on a complex manifold
\begin{equation}
\label{isotr}
     g_{i\,j}= g_{\bar i\, \bar j}=0,
\end{equation}
and $(g_{i\,\bar j})$ is a $n\times n$ Hermitian matrix $\bar g_{i\, \bar j} = g_{j\,\bar i}$.
The coefficients $b_A=g(Z_0,Z_A)$ satisfy
\begin{equation}
\label{coisotr}
   b_{00}=\bar b_{00},\quad  b_{\bar i} = g_{0\bar i}=g_{\bar 0\bar i}=\bar b_i.              
\end{equation}
Summing up all above formulas we can  write
\begin{equation}
\label{metr}
  ds^2= r\hspace{2pt}dt^2 +2\dsum_{i=1}^n \big(b_i\hspace{2pt}dt\hspace{1pt} dz^i + 
        b_{\bar i}\hspace{2pt} dt\hspace{1pt}d\bar z^i\big)
     + 2\dsum_{i,j=1}^n g_{i\,\bar j}\hspace{2pt}dz^i d\bar z^j,     
\end{equation}
here $r$ stands for $g_{00}$. All coefficients $\eta_A$ vanish except $\eta_0=\eta(Z_0)$ therefore $\eta=udt$. Without loss of the generality we may assume $u=1$ for $d\eta=du\wedge dt=0$. Since $\bar\xi=\xi$ ($\xi$ is real) and $\eta(\xi)=1$ it follows that
\begin{equation}
      \xi =Z_0 + \sum_{i=1}^n \left(  a^i Z_i +a^{\bar i}Z_{\bar i} \right)\hspace{2pt},
\end{equation}
and $\bar a^i=a^{\bar i}$. In virtue of $\varphi\xi=0$ the last formula implies
\begin{equation}
         \varphi Z_0 = -\sqrt{-1}\ \sum_{i=1}^n \left( a^i Z_i
                        -a^{\bar i}Z_{\bar i} \right)\hspace{2pt}.
\end{equation}
We point out the following relations between $b_i$, $a^i$ and $r$
\begin{equation}
     r=1+2\dsum_{i,j=1}^n a^i a^{\bar j}g_{i\,\bar j}, \quad\quad 
     b_i = -\dsum_{j=1}^n a^{\bar j} g_{i\,\bar j}\hspace{2pt},                           
\end{equation}
they are consequences of $\eta(Z_A)=g(\xi,Z_A)$ and $g(\xi,\xi)=1$. 

\begin{theorem}Let $( \mathcal{M},\varphi,\xi,\eta,g)$, $\dim \mathcal{M}=2n+1\geqslant 3$  be a real-analytic almost cosymplectic manifold with K\"alerian leaves. Then
\begin{itemize}
\item[(a)] there is an open covering $(\mathcal U_\iota)_{\iota\in I}$, such that for each $\mathcal U_\iota$ there is 
a diffeomorphism   
\[
 f_\iota : U_\iota\rightarrow (-a,a)\times D',
\]
where $(-a,a)$ is an open interval, $a>0$ and $D'$ is a domain in $\mathbb C^n$; 
\item[(b)] on the set $U_\iota$, the structure $(\varphi,\xi,\eta,g)$ is described   by the following  local expressions
\begin{equation*}
\label{eksi}
\begin{array}{l}  
    \varphi\dfrac{\partial}{\partial t}=-\sqrt{-1}\ \sum_{i=1}^n \left( a^i\dfrac{\partial}{\partial z^i}
  - a^{\bar i}\dfrac{\partial}{\partial z^{\bar i}} \right), \quad  \\[+12pt]
  \varphi \dfrac{\partial}{\partial z^i}=\sqrt{-1}\ \dfrac{\partial}{\partial z^i},\quad\quad
  \varphi \dfrac{\partial}{\partial z^{\bar i}}= - \sqrt{-1}\ \dfrac{\partial}{\partial z^{\bar i }}, \quad i=1,\ldots,n,\\[+12pt]  
  \eta= dt, \quad \\[+8pt] 
    \xi = \dfrac{\partial}{\partial t} + \sum_{i=1}^n \left(a^i\dfrac{\partial}{\partial z^i}
  + a^{\bar i}\dfrac{\partial}{\partial z^{\bar i}} \right), \\[+12pt]  
    g= r\hspace{2pt}dt^2 +2\dsum_{i=1}^n \left( b_i\hspace{2pt}dt\hspace{1pt} dz^i + 
        b_{\bar i}\hspace{2pt} dt\hspace{1pt}dz^{\bar i}\right)
     + 2\dsum_{i,j=1}^n g_{i\,\bar j}\hspace{2pt}dz^i\, dz^{\bar j},     
\end{array}
\end{equation*}
\item[(c)] the coefficients $a^i$, $a^{\bar i}$, $b_i$, $b_{\bar i}$, $r$, $g_{i\,\bar j}$ are related by  
\begin{equation*}
     r=1+2\dsum_{i,j=1}^n a^i a^{\bar j}g_{i\,\bar j}, \quad\
     b_i= -\dsum_{j=1}^n a^{\bar j} g_{i\,\bar j}, \quad a^{\bar i} =\bar a^i, \quad b_{\bar i}= \bar b _i,
\end{equation*}
and $(g_{i\,\bar j})$ is $n\times n$ Hermitian matrix.
\end{itemize}
\end{theorem}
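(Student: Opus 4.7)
The plan is to assemble facts already established rather than perform substantially new work. Part (a) is an immediate restatement of the CR-chart proposition proved just above, which used the Andreotti--Hill embedding together with the local biholomorphism straightening a Levi flat real-analytic hypersurface. So the real content is parts (b) and (c): describing the tensors $(\varphi,\xi,\eta,g)$ in an already-constructed CR chart $(t,z^1,\ldots,z^n)$.

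For part (b), I would argue as follows. Because the chart was built so that $\mathcal{D}'$ coincides with $T^{(1,0)}D'$ pulled back through $f_\iota$, the identifications $\mathcal{D}'=\mathrm{span}_{\mathbb{C}}(Z_1,\ldots,Z_n)$ and $\overline{\mathcal{D}'}=\mathrm{span}_{\mathbb{C}}(Z_{\bar 1},\ldots,Z_{\bar n})$ are built in. The eigenspace decomposition of $\varphi^{\mathbb{C}}$ then forces $\varphi Z_i=\sqrt{-1}Z_i$ and $\varphi Z_{\bar i}=-\sqrt{-1}Z_{\bar i}$. Since the real kernel $\ker\eta=\mathrm{Im}\,\varphi$ is exactly the real span of $\{Z_i+Z_{\bar i},\,\sqrt{-1}(Z_i-Z_{\bar i})\}$, the form $\eta$ has only a $dt$ component, $\eta=u\,dt$; closedness gives $du\wedge dt=0$, hence $u=u(t)$, and a reparametrization $t\mapsto \int u\,dt$ (which does not disturb the complex coordinates) makes $\eta=dt$. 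Once $\eta=dt$, the condition $\eta(\xi)=1$ forces the $Z_0$-coefficient of $\xi$ to equal $1$, while the remaining components live in $\mathcal{D}^{\mathbb{C}}$ and can be written as $\sum(a^iZ_i+a^{\bar i}Z_{\bar i})$ with $a^{\bar i}=\overline{a^i}$ because $\xi$ is real. Applying $\varphi$ to this decomposition and using $\varphi\xi=0$ yields the stated formula for $\varphi Z_0$.

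The metric part of (b) follows from the compatibility $g(\varphi X,\varphi Y)=g(X,Y)-\eta(X)\eta(Y)$: evaluating with $X=Z_i,\,Y=Z_j$ gives $-g_{ij}=g_{ij}$, hence $g_{ij}=0$, and conjugation gives $g_{\bar i\bar j}=0$. Reality of $g$ implies $g_{\bar i j}=\overline{g_{i\bar j}}$, while the remaining entries $r=g_{00}$, $b_i=g_{0i}$, $b_{\bar i}=\overline{b_i}$ are unconstrained at this stage; putting everything together gives the displayed expression for $ds^2$. The relations in (c) are then purely algebraic: expanding $\eta(Z_i)=0=g(\xi,Z_i)$ in the basis $\{Z_A\}$ gives $b_i+\sum_j a^{\bar j}g_{i\bar j}=0$, and expanding $g(\xi,\xi)=\eta(\xi)=1$ gives $r=1+2\sum_{i,j}a^ia^{\bar j}g_{i\bar j}$.

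The only genuinely delicate step is the reparametrization of $t$ used to normalize $\eta=dt$: I would need to check that replacing the real coordinate by an antiderivative of $u$ still produces a valid diffeomorphism onto a product $(-a',a')\times D'$ and leaves the complex structure on $D'$ unchanged, so that both the conclusion of part (a) and the identification $\mathcal{D}'=T^{(1,0)}D'$ survive. Everything else is bookkeeping forced by the closedness of $\eta$, the $\varphi$-eigenvalue decomposition of the complexified tangent bundle, and the compatibility relation \eqref{structeq}.
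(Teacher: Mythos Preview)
Your proposal is correct and follows essentially the same route as the paper: part (a) is the CR-chart proposition, and parts (b)--(c) are obtained exactly as in Section~3.2 by reading off $\varphi$ on the $Z_i$ from the eigenspace description of $\mathcal D'$, normalizing $\eta=u\,dt$ to $dt$ via $d\eta=0$, extracting $\xi$ and $\varphi Z_0$ from $\eta(\xi)=1$, $\bar\xi=\xi$, $\varphi\xi=0$, and deriving the metric relations from the compatibility identity together with $\eta(Z_A)=g(\xi,Z_A)$. Your explicit check that the $t$-reparametrization (with $u=u(t)\neq 0$) preserves the product form $(-a',a')\times D'$ and the identification $\mathcal D'=T^{(1,0)}D'$ is a point the paper passes over with ``without loss of generality,'' so you are in fact slightly more careful there.
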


\subsection{Hermitian structure of an almost cosymplectic CR manifold}
 We start from an extension of the Levi-Civita connection to the complex connection in the complexified bundle $T_\mathbb{C}\mathcal{M}$. For a local  section $Z$ of $T_\mathbb{C}\mathcal{M}$ 
\[
 Z = \sum_{i=1}^{2n+1}f^iX_i,
\]
where $(X_1,\ldots,X_{2n+1})$ is a local frame of (real) vector fields on $\mathcal{M}$ and $f^i$ are complex valued functions we define 
\[
\begin{array}{l}
Z\mapsto \nabla Z = \nabla (\sum_{i=1}^{2n+1}f^iX_i) = \\[+12pt]
\hspace{1.5cm}=\left(\sum_{i=1}^{2n+1} df^i\otimes X_i\right) + 
   \left(\sum_{i=1}^{2n+1} f^i\nabla X_i\right),
 ×
\end{array}
\]
where $df = d(\Re f+\sqrt{-1}\Im f) =d\,(\Re f)+\sqrt{-1}d\,(\Im f)$. We have to of course 
verify that $\nabla Z$ is independent of the choice of a local real frame. This definition  follows that 
\[
\begin{array}{l}
\nabla g Z = dg\otimes Z+g\nabla Z, \quad \;\textnormal{for a complex-valued function}\; g\\[+6pt]
\nabla Z = \nabla (X+\sqrt{-1}Y) = \nabla X+\sqrt{-1}\nabla Y,
 ×
\end{array}
\]
$X = \Re Z$, $Y = \Im Z$ are real and imaginary parts of $Z$. Now if $Z$ is a section of the 
CR structure $\mathcal{D'}$ then there is a real vector field $Y$ such that 
\[
 Z=Y-\sqrt{-1}\varphi Y,\quad \eta(Y)=0,
\]
therefore
\[
\begin{array}{rcl}
 \nabla_X Z &=& \nabla_X Y-\sqrt{-1}\nabla_X\varphi Y = \\[+4pt] 
         &=&        \left(\left(\nabla_XY-\eta(\nabla_XY)\xi\right)\right) -\sqrt{-1}\varphi\nabla_XY
       +\eta(\nabla_XY)\xi +\sqrt{-1}g(A\varphi X,Y)\xi.
 ×
\end{array}
\]
Let denote $Y'= \nabla_XY-\eta(\nabla_XY)\xi$, $Y'$ is a tangential part of the derivative 
$\nabla_XY$, i.e. $\eta(Y')=0$, thus
\[
\left(\nabla_XY-\eta(\nabla_XY)\xi\right)-\sqrt{-1}\varphi\nabla_XY = 
Y'-\sqrt{-1}\varphi Y',
\]
is a $\mathcal{D'}$-component of $\nabla_X Z$. Let
$$
Z''= \text{a}\;\mathcal{D'}\text{-component of}\; \nabla_XZ,
$$
then
\[
 \nabla_XZ = Z''+\eta(\nabla_XY)\xi +\sqrt{-1}g(A\varphi X,Y)\xi.
\]
Note that $\eta(\nabla_XY)\xi = g(AX,Y)\xi$ 
according to the definition of $A$ and $\eta(Y)=0$:
\[
 \eta(\nabla_XY)\xi+\sqrt{-1}g(A\varphi X,Y)\xi = 
\left(g(AX,Y)+\sqrt{-1}g(A\varphi X,Y)\right)\xi = g(X, A \bar Z)\xi,
\]
$\bar Z$ is the complex conjugate of $Z$. Finally 
\[
 \nabla_XZ = Z'' + g(X,A\bar Z)\xi.
\]

\begin{proposition}
 The map
\[
 Z \mapsto \nabla'_XZ = \nabla_XZ-g(X,A\bar Z)\xi,
\]
defines a complex connection in the CR structure $\mathcal{D'}$ - as a connection in a complex vector bundle. Moreover this connection 
is Hermitian with respect to a Hermitian metric $H$ on $\mathcal{D'}$ defined by 
\[
 H(Z,W) = g(Z, \bar W), \quad Z,W \in \Gamma(\mathcal{D'}).
\]
\end{proposition}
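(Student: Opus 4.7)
I would proceed by first observing that the preparatory calculation immediately preceding the statement has in effect identified $\nabla'_XZ$ with the $\mathcal{D}'$-component of $\nabla_XZ$ relative to the splitting
\[
T_{\mathbb{C}}\mathcal{M}=\mathcal{D}'\oplus\overline{\mathcal{D}'}\oplus\langle\xi\rangle.
\]
That calculation shows that $\nabla_XZ$ has no $\overline{\mathcal{D}'}$-part and that its $\xi$-part equals $g(X,A\bar Z)\xi$; hence the subtraction in the definition of $\nabla'_XZ$ kills exactly this $\xi$-contribution and returns the $\mathcal{D}'$-part. In particular $\nabla'_XZ\in\Gamma(\mathcal{D}')$ by construction, so $\nabla'$ really does act on the complex subbundle $\mathcal{D}'$.

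Granted this identification, I would read off the connection axioms from those of $\nabla$ by composition with the $\mathbb{C}$-linear projection $\pi\colon T_{\mathbb{C}}\mathcal{M}\to\mathcal{D}'$. Tensoriality and additivity in $X$ are inherited directly. For the Leibniz rule, for any complex-valued $f$ and any $Z\in\Gamma(\mathcal{D}')$ one writes
\[
\nabla'_X(fZ)=\pi\bigl(X(f)\,Z+f\,\nabla_XZ\bigr)=X(f)\,Z+f\,\pi(\nabla_XZ)=X(f)\,Z+f\,\nabla'_XZ,
\]
the middle step using that $\pi$ is $\mathbb{C}$-linear (being projection onto a complex subbundle parallel to complex complements) and that $Z$ itself already lies in $\mathcal{D}'$.

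For the Hermitian property I would then exploit the single observation that $\xi$ is orthogonal to both $\mathcal{D}'$ and $\overline{\mathcal{D}'}$, since $\eta(Z)=0$ and $\eta(\bar W)=\overline{\eta(W)}=0$ for $Z,W\in\Gamma(\mathcal{D}')$. Metric compatibility of the complexified Levi-Civita connection then gives
\[
X\,H(Z,W)=X\,g(Z,\bar W)=g(\nabla_XZ,\bar W)+g(Z,\overline{\nabla_XW}),
\]
while the corrections $g(X,A\bar Z)\xi$ and $g(X,A\bar W)\xi$ drop out when paired with $\bar W$ or $Z$ via $g$, since they produce factors of $\eta(\bar W)$ or $\eta(Z)$ respectively. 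Hence the right-hand side equals $H(\nabla'_XZ,W)+H(Z,\nabla'_XW)$, which is the required Hermitian compatibility. The only genuinely non-trivial step in the whole argument is the preparatory decomposition, which rests on the K\"ahler leaves identity (\ref{kaehleav}) together with the symmetry of $A$; once this step is in hand, everything after that is formal bookkeeping.
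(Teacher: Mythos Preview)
Your proof is correct and follows essentially the same line as the paper's: the preparatory computation has already produced $\nabla_XZ=Z''+g(X,A\bar Z)\xi$ with $Z''\in\Gamma(\mathcal{D}')$, and the Hermitian compatibility is verified exactly as you do, by expanding $Xg(Z,\bar W)$ via the complexified Levi--Civita connection and observing that the $\xi$-corrections vanish because $g(\xi,\bar W)=g(Z,\xi)=0$. Your explicit check of the connection axioms through the projection $\pi$ just makes precise a step the paper leaves implicit.
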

\begin{proof}
To prove that $\nabla'$ is Hermitian with respect to $H$ we have to show that
\[
 XH(Z_1,Z_2) = H(\nabla'_XZ_1, Z_2)+H(Z_1,\nabla'_X Z_2),
\]
for arbitrary $X$ real vector field and arbitrary sections $Z_1$, $Z_2$ of $\mathcal{D'}$. From the definition:
\[
 XH(Z_1,Z_2) = Xg(Z_1,\bar Z_2) = g(\nabla_X Z_1, \bar Z_2)+g(Z_1, \nabla_X \bar Z_2),
\]
note that 
\[
 \begin{array}{l}
\nabla_X \bar Z_2 = \overline{\nabla_X Z_2} = \overline{\nabla'_X Z_2}+\overline{g(X,A \bar Z)\xi} = \overline{\nabla'_X Z_2}+ g(X,AZ)\xi, \\[+4pt]
g(\xi, \bar Z_2) = g(Z_1, \xi) = 0
 ×
\end{array}
\]
hence
\[
 g(\nabla_X Z_1, \bar Z_2)+g(Z_1, \nabla_X\bar Z_2)= 
g(\nabla'_XZ_1, \bar Z_2) + g(Z_1, \overline{\nabla'_X Z_2})= H(\nabla'_X Z_1, Z_2)+
H(Z_1, \nabla'_X Z_2).
\]
\end{proof}
It is very interesting topic to study the geometry of the manifold from the  point of view 
of that canonical Hermitian structure. However these problems deserve its own attention so 
we stop here - as a starting point for the further investigations. 
\section{Almost cosymplectic $(\kappa,\mu,\nu)$- spaces} 

An almost  contact metric structure of a model space $(-1,\mu,0)$, $\mu=const$ can be realized as a left-invariant structure on a  Lie group $\mathcal{G}$. If $\mathcal{G}$ is simply connected (and connected as all manifolds considered here are assumed to be connected) then $\mathcal{G}$ 
is diffeomorphic to $\mathbb{R}^{2n+1}$ and there is a frame of 
 left-invariant vector fields $(Z,X_1,\ldots,X_n,Y_1,\ldots,Y_n)$, the basis of the Lie algebra $\mathfrak{g}=\mathfrak{g}(\mu)$ of $\mathcal{G}$,  such that
\begin{equation}
 \begin{array}{l}
\xi = Z, \quad \eta(Z) =1, \quad \eta(X_i)=\eta(Y_j)=0, \; i,j=1,\ldots,n ,\\[+4pt]
\varphi X_i = Y_i,\quad \varphi Y_i = - X_i, \; i=1,\ldots,n ,
 ×
\end{array}
\end{equation}
and the frame is orthonormal.  Let $(t,x^1,\ldots,x^n,y^1,\ldots,y^n)$ be a global chart on $\mathbb{R}^{2n+1}$ then  the vector fields $X_i$'s, $Y_j$'s are described explicitly as follows (in all cases $\xi=\partial/\partial t$ and $i=1,\ldots,n$) \cite{DO3}:
\begin{enumerate}
 \item $|\mu| < 2$, $\omega=\sqrt{1-\mu^2/4}$:
\begin{equation*}
 \begin{array}{rcl}
X_i &=& (\cosh(\omega t)+\dfrac{\sinh(\omega t)}{\omega})\dfrac{\partial}{\partial x^i} 
      - \dfrac{\mu \sinh(\omega t)}{2\omega}\dfrac{\partial}{\partial y^i}, \\[+12pt]
Y_i &=& \dfrac{\mu \sinh(\omega t)}{2\omega}\dfrac{\partial}{\partial x^i} 
      + (\cosh(\omega t)-\dfrac{\sinh(\omega t)}{\omega})\dfrac{\partial}{\partial y^i},   
×
\end{array}
\end{equation*}
\item $|\mu| =2$ ($\omega=0$):
\begin{equation*}
 X_i = (1+t)\dfrac{\partial}{\partial x^i} -\varepsilon\,t\,\dfrac{\partial}{\partial y^i}, \quad
Y_i = \varepsilon\,t\, \dfrac{\partial}{\partial x^i} + (1-t)\dfrac{\partial}{\partial y^i},\quad 
 \varepsilon = \mu/2 = \pm 1,
\end{equation*}
\item $|\mu|>2$, $\omega=\sqrt{-1+\mu^2/4}$:
\begin{equation*}
 \begin{array}{rcl}
X_i &=& (\cos(\omega t)+ \dfrac{\sin(\omega t)}{\omega})\dfrac{\partial }{\partial x^i} 
      - \dfrac{\mu\sin(\omega t)}{2\omega}\dfrac{\partial}{\partial y^i},\\[+12pt]
Y_i &=& \dfrac{\mu\sin(\omega t)}{2\omega}\dfrac{\partial}{\partial x^i} + 
      (\cos(\omega t)-\dfrac{\sin(\omega t)}{\omega})\dfrac{\partial}{\partial y^i},
 ×
\end{array}
\end{equation*}
\end{enumerate}

As we see the local descriptions are quite different depending on the value of $\mu$. 
Note that algebras $\mathfrak{g}(\pm 2)$ are 'limits' (two-sided) 
\[
 \mathfrak{g}(\pm 2) = \lim\limits_{\mu \rightarrow \pm 2}\mathfrak{g}(\mu), \quad \mu\neq \pm 2.
\]
 Nevertheless the 
commutators of the vector fields can be described in a unique manner in all cases:
\begin{equation}
\label{munu-comm}
 \mathfrak{g}(\mu):\quad [\xi,X_i] = X_i-\dfrac{\mu}{2}Y_i, \quad [\xi,Y_i] = \dfrac{\mu}{2}X_i-Y_i,
\end{equation}
the other commutators vanish identically. 

On the base of the relations (\ref{munu-comm})  here we will provide a different local representation. Let again $\mathcal{M} = \mathbb{R}^{2n+1}$, $\mathcal{M}\ni p = (t,x^1, \ldots,x_n,y^1,\ldots,y_n)$
and we set 
\begin{equation}
\label{munu-xiCR}
\begin{array}{l}
\xi = \dfrac{\partial}{\partial t} + \sum\limits_{i=1}^n(-x^i-\dfrac{\mu}{2}y^i)\dfrac{\partial}{\partial x^i}
 + \sum\limits_{i=1}^n(\dfrac{\mu}{2}x^i+y^i)\dfrac{\partial}{\partial y^i},\\
X_i =\dfrac{\partial}{\partial x^i},\qquad Y_i =\dfrac{\partial}{\partial y^i}, \quad i=1,\ldots,n   
×
\end{array}
\end{equation}
Direct computations show that such defined vector fields  satisfy (\ref{munu-comm}). Now, let 
identify $\mathbb{R}^{2n+1} \cong \mathbb R \times \mathbb{C}^n$ in the way that 
\begin{equation}
\label{munu-CRcoord}
\begin{array}{l}
 p = (t,x^1,\ldots,x^n,y^1,\ldots,y^n) = (t, z^1,\ldots, z^n) \in \mathbb{R}\times \mathbb{C}^n, \\[+4pt]
  z^1 = x^1+\sqrt{-1}\,y^1, \ldots,\; z^n = x^n+\sqrt{-1}\,y^n,
\end{array}
\end{equation}
the vector fields 
\begin{equation}
\label{munu-Zframe}
\begin{array}{rcl}
Z_0 &=& \dfrac{\partial}{\partial t},  \\[+6pt]
Z_i &=& \dfrac{1}{2}(\dfrac{\partial}{\partial x^i}-\sqrt{-1}\;\dfrac{\partial}{\partial y^i})=
\dfrac{\partial}{\partial  z^i},
\quad i=1,\ldots,n \\[+12pt]
Z_{\bar i} &=& \dfrac{1}{2}(\dfrac{\partial}{\partial x^i}+\sqrt{-1}\dfrac{\partial}{\partial y^i})=
\dfrac{\partial}{\partial \bar z^i}, 
\quad i =1,\ldots,n 
×
\end{array}
\end{equation}
form a frame (in our case global) of vector fields of the complexified tangent bundle $T_{\mathbb{C}}\mathcal{M}$. According to (\ref{munu-xiCR}), (\ref{munu-CRcoord}), (\ref{munu-Zframe}) we have
\[ 
\begin{array}{l}
\xi =\dfrac{\partial}{\partial t}+ \sum\limits_{i=1}^n(-x^i-\dfrac{\mu}{2}y^i)\dfrac{\partial}{\partial x^i} 
+ \sum\limits_{i=1}^n(\dfrac{\mu}{2}x^i+y^i)\dfrac{\partial}{\partial y^i} = \\
= \dfrac{\partial}{\partial t} +\dfrac{1}{2}\sum\limits_{i=1}^n(-(z^i+\bar z^i)-\dfrac{\mu}{2}\dfrac{z^i -\bar z^i}{\sqrt{-1}})(\dfrac{\partial}{\partial z^i}+\dfrac{\partial}{\partial \bar z^i}) + \\
+ \dfrac{1}{2}\sum\limits_{i=1}^n(\dfrac{\sqrt{-1}\mu}{2}(z^i+\bar z^i) + (z^i-\bar z^i))(\dfrac{\partial}{\partial z^i}-\dfrac{\partial}{\partial \bar z^i}).
\end{array}
\]
Providing similar computations for the (complexified) tensor fields $\varphi$, $\eta$, and $g$ we obtain such expressions:  
\begin{equation*}
 \begin{array}{l}
 \eta = dt, \\
 \xi = \dfrac{\partial}{\partial t} + 
  \sum\limits_{i=1}^n(-\bar z^i +\dfrac{\sqrt{-1}\mu}{2}z^i)\dfrac{\partial }{\partial z^i} 
+\sum\limits_{i=1}^n (-z^i-\dfrac{\sqrt{-1}\mu}{2}\bar z^i)\dfrac{\partial}{\partial \bar z^i}, \\[+12pt] 
\sqrt{-1}\;\varphi\;\dfrac{\partial}{\partial t} =\sum\limits_{i=1}^n (-\bar z^i+\dfrac{\sqrt{-1}\mu}{2}z^i)\dfrac{\partial}{\partial z^i} +\sum\limits_{i=1}^n(z^i+\dfrac{\sqrt{-1}\mu}{2}\bar z^i)\dfrac{\partial}{\partial \bar z^i},\\[+12pt]
\varphi \dfrac{\partial}{\partial z^i} = \sqrt{-1}\dfrac{\partial}{\partial z^i},\qquad
\varphi \dfrac{\partial}{\partial \bar z^i} = -\sqrt{-1}\dfrac{\partial}{\partial \bar z^i},\qquad i=1,\ldots,n ,\\[+12pt]
ds^2 = r\,dt^2 + 2\sum\limits_{i=1}^n (z^i+\dfrac{\sqrt{-1}\mu}{2}\bar z^i) dt dz^i 
+ 2\sum\limits_{i=1}^n(\bar z^i-\dfrac{\sqrt{-1}\mu}{2}z^i) dt d\bar z^i + 2\sum\limits_{i=1}^n dz^i d\bar z^i,\\
r= 1+2\sum\limits_{i=1}^n|z^i+\dfrac{\sqrt{-1}\mu}{2}\bar z^i|^2.
\end{array}
\end{equation*}
Recently D. Perrone has classified  Riemannian homogeneous simply connected almost cosymplectic three-folds under an assumption that there is a group of isometries acting transitively and leaving the form $\eta$ invariant \cite{Perrone}. 
\begin{theorem}\cite{Perrone} Let $(\mathcal{M},\varphi,\xi,\eta,g)$ be a be a simply connected homogeneous
\footnote{It is assumed that  $\eta$ is invariant} almost cosymplectic three-manifold. Then either $\mathcal{M}$ is a Lie group $G$ equipped with a left invariant almost cosymplectic structure, or a Riemannian product of type $\mathbb{R}\times \mathcal{N}$, where $\mathcal{N}$ is a simply connected K\"ahler surface of constant curvature.
\end{theorem}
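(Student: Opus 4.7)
The plan is to analyze the isotropy representation at a base point and to dichotomize according to its dimension. Let $G$ be the identity component of the isometry group preserving $\eta$, and let $H=G_p$ be the isotropy at a point $p\in\mathcal{M}$. Since $\xi$ is recovered from $(g,\eta)$ by $\eta(\cdot)=g(\cdot,\xi)$, the group $G$ automatically preserves $\xi$; and since in dimension three the $g$-compatible complex structure on the oriented $2$-plane $\ker\eta_p$ is unique, a \emph{connected} group preserving $(g,\eta)$ also preserves $\varphi$. Hence $H$ acts on $\ker\eta_p$ as a subgroup of $U(1)\cong SO(2)$, forcing $\dim H\in\{0,1\}$.

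If $\dim H=0$, then $\dim G=3$ and $G$ acts on $\mathcal{M}$ with discrete stabilizers. Passing to the universal cover $\widetilde G$ produces a free transitive action, and simple connectedness of $\mathcal{M}$ makes the orbit map $\widetilde G\to\mathcal{M}$ a diffeomorphism; transporting the structure through this map realizes $(\varphi,\xi,\eta,g)$ as a left-invariant almost cosymplectic structure on the Lie group $\widetilde G$.

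If $\dim H=1$, then $H^0\cong SO(2)$. Because $G$ preserves the foliation $\eta=0$, the subgroup of $G$ fixing the leaf $\mathcal{N}_p$ setwise acts transitively on $\mathcal{N}_p$ with isotropy $H$ at $p$; the $SO(2)$-action on $T_p\mathcal{N}_p=\ker\eta_p$ being transitive on directions forces $\mathcal{N}_p$ to be a $2$-dimensional space form of constant Gaussian curvature. To upgrade this leafwise picture to a global Riemannian product, the aim is to deduce $\nabla\xi=0$. Using the standard identity $\nabla\xi=-\varphi h$ with $h=\tfrac{1}{2}\mathcal{L}_\xi\varphi$ for almost cosymplectic manifolds, it suffices to show $h_p=0$: the endomorphism $h|_{\ker\eta_p}$ is self-adjoint, so $SO(2)$-invariance forces it to be a scalar multiple of the identity, after which the anti-commutation relation $h\varphi+\varphi h=0$ forces that scalar to vanish. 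By homogeneity $h\equiv 0$, hence $\xi$ is a parallel unit vector field; completeness (automatic for homogeneous metrics) and simple connectedness then yield via the de~Rham decomposition the global splitting $\mathcal{M}=\mathbb{R}\times\mathcal{N}$, with $\mathcal{N}$ a simply connected K\"ahler surface of constant curvature inheriting the geometry of any leaf.

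The main obstacle is the implication $\dim H=1\Rightarrow\nabla\xi=0$: the linear-algebra step, that an $SO(2)$-invariant self-adjoint endomorphism of a $2$-plane anti-commuting with a compatible complex structure must vanish, relies essentially on the almost cosymplectic identity $h\varphi+\varphi h=0$. Once this is established, the remainder reduces to orbit-stabilizer bookkeeping and an appeal to de~Rham.
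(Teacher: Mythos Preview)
The paper does not supply a proof of this theorem: it is quoted verbatim from Perrone \cite{Perrone} and used only as input for the corollary that follows. There is therefore no in-paper argument to compare your proposal against.

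As an independent proof, your outline is essentially the standard approach and is correct in substance. Two steps would benefit from a line of justification. First, in the case $\dim H=0$ you assert that lifting to $\widetilde G$ yields a \emph{free} action; this follows from the homotopy exact sequence of the fibration $\widetilde H\hookrightarrow\widetilde G\to\mathcal M$, which with $\pi_1(\widetilde G)=\pi_1(\mathcal M)=0$ forces the discrete group $\widetilde H$ to be connected and hence trivial. Second, the transitivity of the leaf-preserving subgroup on $\mathcal N_p$ is not automatic from ``$G$ preserves the foliation''; the clean way is to note that for any Killing field $X$ of $G$ one has $\mathcal L_X\eta=0$ and $d\eta=0$, so $\eta(X)$ is a constant, giving a linear functional $\mathfrak g\to\mathbb R$ whose kernel $\mathfrak k$ has dimension $3$ and contains $\mathfrak h$, whence the $K$-orbit of $p$ in $\mathcal N_p$ is open and hence everything. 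The key step---that $SO(2)$-invariance together with self-adjointness and the almost cosymplectic identity $h\varphi+\varphi h=0$ kill $h$, so $\nabla\xi=-\varphi h=0$ and de~Rham applies---is exactly right.
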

From the list provided in \cite{Perrone} we are interested only in the case of non-cosymplectic three-manifolds and unimodular Lie groups as all Lie groups corresponding to the algebras $\mathfrak{g}(\mu)$ are unimodular:
\begin{itemize}
\item the universal covering $\tilde E(2)$ of the group of rigid motions of Euclidean 2-space, when $p > 0$,
\item the group $E(1,1)$ of rigid motions of Minkowski 2-space when $p < 0$,
\item the Heisenberg group $H^3$ when $p = 0$,
\end{itemize}
here $p$ is a metric invariant of the classification defined by
\[
 p = \|\mathcal{L}_\xi h\|-2\|h\|^2.
\]
\begin{corollary}
 Let $(\mathcal{M}^3,\varphi,\xi,\eta,g)$ be a simply connected 3-dimensional almost cosymplectic \\ $(-1,\mu,0)$-space. Then $\mathcal{M}^3$ is a Lie group equipped with appropriate left invariant almost cosymplectic structure and 
\begin{itemize}
 \item $\mathcal{M}^3$ is a universal covering of the group of rigid motions of  Euclidean 2-plane for  $|\mu| > 2$,
\item  $\mathcal{M}^3$ is a group of rigid motions of Minkowski 2-plane for $|\mu| <2 $,
\item $\mathcal{M}^3$ is a Heisenberg group for $|\mu| =2 $
\end{itemize}

\end{corollary}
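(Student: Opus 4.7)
The plan is to invoke Perrone's theorem and then pin down which Lie group appears by computing Perrone's metric invariant $p$ on the model algebra $\mathfrak g(\mu)$.

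\textbf{Step 1: reduction to a Lie group.} First I would rule out the product alternative in Perrone's theorem. On a Riemannian product $\mathbb R\times\mathcal N$ the vector field $\xi$ is parallel, hence $A=-\nabla\xi\equiv 0$; but for a $(-1,\mu,0)$-space the $(\kappa,\mu,\nu)$-structure equations give $A^2=Id-\eta\otimes\xi\neq 0$. So $\mathcal M^3$ must be a simply connected Lie group with a left-invariant almost cosymplectic structure. Equivalently, because $(-1,\mu,0)$-spaces are modelled on the Lie group $\mathcal G(\mu)$ with algebra $\mathfrak g(\mu)$ described by (\ref{munu-comm}), simple connectedness forces $\mathcal M^3$ to be the simply connected group integrating $\mathfrak g(\mu)$. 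The algebra $\mathfrak g(\mu)$ is unimodular (the trace of $\mathrm{ad}_\xi$ on $\mathrm{span}\{X_1,Y_1\}$ vanishes and all other brackets are zero), so $\mathcal M^3$ lands in one of the three unimodular cases following Perrone's theorem.

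\textbf{Step 2: computing $p$.} Using the left-invariant orthonormal frame $\{\xi,X_1,Y_1\}$, the brackets (\ref{munu-comm}), and $\varphi X_1=Y_1$, $\varphi Y_1=-X_1$, a direct computation of $h=\frac12\mathcal L_\xi\varphi$ gives
\begin{equation*}
hX_1=-Y_1,\qquad hY_1=-X_1,\qquad \|h\|^2=2.
\end{equation*}
Differentiating once more via the same commutators yields
\begin{equation*}
(\mathcal L_\xi h)X_1=-\mu X_1+2Y_1,\qquad (\mathcal L_\xi h)Y_1=-2X_1+\mu Y_1,
\end{equation*}
hence $\|\mathcal L_\xi h\|^2=2(\mu^2+4)$. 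Substituting in Perrone's invariant,
\begin{equation*}
p=\|\mathcal L_\xi h\|-2\|h\|^2=\sqrt{2(\mu^2+4)}-4,
\end{equation*}
which is positive for $|\mu|>2$, zero for $|\mu|=2$, and negative for $|\mu|<2$. Matching with Perrone's list identifies $\mathcal M^3$ as $\tilde E(2)$, $H^3$, and $E(1,1)$ respectively.

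\textbf{Main obstacle.} The delicate part is the bookkeeping in Step 2: any sign error in (\ref{munu-comm}) or in the $\varphi$-action flips an inequality and mis-assigns the groups. A convenient independent check is purely Lie-algebraic: the matrix of $\mathrm{ad}_\xi$ on $\mathrm{span}\{X_1,Y_1\}$ has determinant $(\mu^2-4)/4$, so $\mathrm{ad}_\xi$ is nilpotent precisely at $|\mu|=2$ (forcing the Heisenberg algebra), has two real eigenvalues for $|\mu|<2$ (forcing the algebra of $E(1,1)$), and purely imaginary eigenvalues for $|\mu|>2$ (forcing the algebra of $\tilde E(2)$), in full agreement with the sign of $p$ and confirming the three-way split.
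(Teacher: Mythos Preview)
Your proposal is correct and follows the route the paper itself indicates: the corollary is stated without an explicit proof, immediately after recalling that the algebras $\mathfrak g(\mu)$ are unimodular and quoting Perrone's trichotomy in terms of the sign of $p=\|\mathcal L_\xi h\|-2\|h\|^2$. You simply carry out the computation of $p$ the paper leaves implicit, and your added eigenvalue check of $\mathrm{ad}_\xi$ is a useful independent confirmation; one small clarification is that Perrone's theorem presupposes homogeneity, so the logical order is first to invoke the classification of \cite{DO3} (as you do in your ``equivalently'' clause) to identify $\mathcal M^3$ with the simply connected group integrating $\mathfrak g(\mu)$, and only then to read off which group via $p$ or via the spectrum of $\mathrm{ad}_\xi$.
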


\end{document}